\theoremstyle{plain}
\newtheorem{theorem}{Theorem}[section]
\newtheorem{proposition}{Proposition}[section]
\theoremstyle{definition}
\newtheorem{remark}{Remark}[section]
\newtheorem{example}{Example}[section]
\font\Bbb=msbm10 at 12pt
\newcommand{\R}{\mbox{\Bbb R}}
\newcommand{\N}{\mbox{\Bbb N}}
\begin{document}
\title[Infinite systems with nonlocal conditions]{Infinite first order differential systems with nonlocal initial conditions}

% ----------------------------------------------------------------

\subjclass[2010]{Primary 34A12, secondary 34A34, 47H30}%
\keywords{Infinite systems, Nonlocal initial condition, Fr\'{e}chet space, Fixed point.}%

\author[G. Infante]{Gennaro Infante}
\address{Gennaro Infante, Dipartimento di Matematica e Informatica, Universit\`{a} della
Calabria, 87036 Arcavacata di Rende, Cosenza, Italy}%
\email{gennaro.infante@unical.it}%

\author[P. Jebelean]{Petru Jebelean}%
\address{Petru Jebelean, Department of Mathematics, West University of
 Timi\c{s}oara, Blvd. V. P\^{a}rvan, no 4, 300223-Timi\c{s}oara,
 Rom{a}nia}%
\email{jebelean@math.uvt.ro}%

\author[F. Madjidi]{Fadila Madjidi}
\address{Fadila Madjidi, Universit\'{e} Abderrahmane Mira,
  Facult\'{e} des Sciences Exactes, Route de Targa-ouzemour, 06000 B\'{e}jaia, Algeria}%
\email{madjidifadila@gmail.com}%

\begin{abstract}
We discuss the solvability of an infinite system of first order ordinary differential equations on the half line, subject to nonlocal initial conditions. The main result states that if the nonlinearities possess a suitable ``sub-linear'' growth then the system has at least one solution. The approach relies on the application, in a suitable Fr\'{e}chet space, of the classical Schauder-Tychonoff fixed point theorem. We show that, as a special case, our approach covers the case of a system of a finite number of differential equations. An illustrative example of application is also provided.
\end{abstract}

\maketitle

\section{Introduction}
In the recent paper~\cite{[BIP]} Bolojan-Nica and co-authors developed a technique that can be used to study the solvability of a system of $N$ first order differential equations of the form
\begin{equation}
\left\{
\begin{array}
[c]{l}%
x_{1}^{\prime}\left(  t\right)  =g_{1}\left(  t,x_{1}\left(  t\right)
,x_{2}(t),...,x_{N}\left(  t\right)  \right),\\
...\\
x_{N}^{\prime}\left(  t\right)  =g_{N}\left(  t,x_{1}\left(  t\right)
,x_{2}\left(  t\right)  ,...,x_{N}\left(  t\right)  \right),
\end{array}
\right.  \quad t \in [0,1],  \label{1}%
\end{equation}
subject to the coupled  \emph{nonlocal} conditions%
\begin{equation}
\left\{
\begin{array}
[c]{l}%
x_{1}\left(  0\right)  =\displaystyle\sum_{j=1}^{N}\langle \eta_{1j},x_j \rangle,\\
...\\
x_{N}\left(  0\right)  =\displaystyle\sum_{j=1}^{N}\langle \eta_{Nj},x_j \rangle,
\end{array}
\right.  \label{1'}%
\end{equation}
where
$\eta_{ij}%
:C[0,1]\rightarrow\mathbb{R}$ are continuous linear
functionals. These functionals involve a suitable support and can be written in a form involving Stieltjes integrals, namely,
\begin{equation}\label{stjcon}
\langle \eta_{ij},v \rangle  =\int_{0}^{\hat{t}}v\left(  s\right)
dA_{ij}\left(  s\right), \quad \left(  v\in C\left[  0,1\right]  \right),
\end{equation}
where $\hat{t}\in[0,1]$ is given.
The formulation~\eqref{stjcon} covers, as special cases,
initial conditions of the type
\begin{equation}
\label{pc}\langle \eta_{ij},v \rangle
=\mathop{\displaystyle \sum }\limits_{k=1}^{m}\eta_{ijk}v\left(  t_{k}\right) ,
\end{equation}
where $\eta_{ijk}\in\mathbb{R}$ and $0\leq t_{1}<t_{2}<...<t_{m}\leq \hat{t}$. Note that, if all the functionals $\eta_{ij}$ have discrete expressions,
\eqref{1'} gives a multi-point condition. The approach in~\cite{[BIP]} relies on the fixed point theorems of Perov, Schauder and Schaefer
and on a vector method for treating systems which uses matrices with spectral radius less than one.

We mention that there exists a wide literature on differential equations subject to nonlocal conditions; we refer here to the pioneering work of
Picone~\cite{Picone}, the  reviews by Whyburn~\cite{Whyburn}, Conti~\cite{Conti}, Ma~\cite{rma},  Ntouyas~\cite{sotiris} and \v{S}tikonas~\cite{Stik}, the papers by Karakostas and Tsamatos~\cite{kttmna, ktejde} and by Webb and Infante~\cite{jwgi-lms,jwgi-lms-II}.

Note that on non-compact intervals the problem of finding solutions of differential systems becomes more complicated, since the associated integral formulation may display a lack of compactness. For systems of a finite number of differential equations on non-compact intervals, subject to linear (or more general) conditions, we refer to the papers by Andres et al.~\cite{[AGG]}, Cecchi and co-authors~\cite{[CFM], [CMZ]},
De Pascale et al.~\cite{[DLM]}, Marino and Pietramala~\cite{[MP]} and Marino and Volpe~\cite{[MV]}.

On the other hand, infinite systems of differential equations often occur in applications, for example
in stochastic processes and quantum mechanics \cite {[AB]} (also, see ch. XXIII in \cite{[HP]}) and in physical chemistry of macromolecules \cite{[O]}. For some recent work on infinite systems on compact intervals we refer the reader to the papers by Frigon~\cite{[F]}, by Bana{\'s} and co-authors~\cite{[B],[BL],[BO]} and references therein.

The methodology to treat initial value problems for infinite systems, often relies on using the theory of differential equations in Banach spaces, but it is also known that this approach can reduce the set of solutions and some of their specific properties. A fruitful alternative is to deal with such systems in the framework of locally convex spaces.
For a discussion emphasizing these aspects we refer the reader to the paper by Jebelean and Reghi\c{s}~\cite{[JR]} and references therein.

The case of initial value problems for infinite systems on non-compact intervals has been investigated in~\cite{[JR]}, where the authors extended a comparison theorem due to Stokes~\cite{S}, valid for the scalar case. Here we utilize the framework developed in~\cite{[JR]}, in order to deal with infinite systems of nonlocal initial value problems on the half line. To be more precise,
 let us denote by $J$ the  half line $[0,+\infty)$ and by $\mathcal{S}$ the space of all real valued sequences,  that is $$\mathcal{S}:=\mathbb{R}^{\mathbb{N}}=\{x=(x_1,x_2,...,x_n,...):\;x_n\in \mathbb{R},\;\forall\;n\in\mathbb{N}\},$$
which is assumed to be endowed with the product topology. We fix $t_0\in (0,+\infty)$ and consider the space $C[0,t_0]$ of all continuous real valued functions defined on $[0,t_0]$ with the usual supremum norm. For each  $n\in \mathbb{N},$ let $f_n:J\times \mathcal{S}\to \mathbb{R}$ be a continuous function and  $\alpha_n: C[0,t_0]\to \mathbb{R}$ be a continuous linear functional.

Here we deal with the solvability of the nonlocal initial value problem
\begin{equation}\label{IVP initial}\left\{
\begin{array}{l}
  x'_n(t) =  f_n(t,x_1(t), x_2(t), ..., x_n(t), ...) \; , \quad t\in J , \\
  \cr
  x_n(0)  =  \langle \alpha_n,x_n|_{[0,t_0]} \rangle,
\end{array}
\right. \qquad(n\in \mathbb{N})
\end{equation}
where, in a similar way as above, $\langle \alpha_n,v \rangle$ denotes the value of the functional $\alpha_n$ at $v\in C[0,t_0]$.
By a {\it solution} of problem \eqref{IVP initial} we mean a sequence of functions $x_n:J \to \R$, $n\in \N$, such that each $x_n$ is derivable on $J$ and satisfies  $$x'_n(t) =  f_n(t,x_1(t), x_2(t), ..., x_n(t), ...), \quad \forall t\in J,$$  together with the nonlocal initial condition $x_n(0)  =  \langle \alpha_n,x_n|_{[0,t_0]} \rangle$.

The rest of the paper is organized as follows. In Section 2 we transform the system \eqref{IVP initial} into a fixed point problem within a suitable Fr\'{e}chet space. In Section 3 we present the main existence result, we show how this approach can be utilized in the special case of a finite number of differential equations and, finally, we provide an example that illustrates our theory. Our results are new in the context of nonlocal problems for infinite systems and complement earlier results in the literature.

\section{An equivalent fixed point problem}
We firstly introduce some notations that are used in the sequel. For $x,y \in S,$ the product element $x \cdot y \in \mathcal{S}$ is defined by $x \cdot y=(x_1 \, y_1,x_2 \, y_2,...,x_n \, y_n,...)$; if $x$ is such that $x_n \neq 0$ for all $n\in \mathbb{N}$, then we denote by $x^{-1}=(x_1^{-1}, x_2^{-1}, ..., x_n^{-1},...)$.
We also set
$$
[x]_{n}=\max\limits_{1\leq i\leq n}|x_i|\; ,\quad \forall \; n\in \mathbb{N},
$$
and notice that the family of seminorms $([\; \cdot \; ]_n)_{n\in \mathbb{N}}$ on the space $\mathcal{S}$ generates a product topology. Endowed with this topology, $\mathcal{S}$ is a Fr\'{e}chet space, that is a locally convex space which is complete with respect to a translation invariant metric.

For the sake of simplicity, we rewrite \eqref{IVP initial} in a more compact form. In order to do this, we define the map $f:J\times \mathcal{S}\to \mathcal{S}$ by
$$
f(t,y)=(f_1(t,y),f_2(t,y),...,f_n(t,y),...) \; , \; \quad \forall\;(t,y)\in J\times \mathcal{S}
$$
and the operator $\alpha: C([0,t_0], \mathcal{S})=\prod\limits_{n=1}^{\infty}C[0,t_0]\to  \mathcal{S}$ by setting
\begin{equation}\label{alpha}
\langle \alpha ,v \rangle =( \langle \alpha_1v_1 \rangle , \langle \alpha_2,v_2\rangle,...,\langle \alpha_n,v_n\rangle,...)  , \!  \quad \forall\; v=( v_1, v_2,..., v_n,...)\in C([0,t_0], \mathcal{S}).
\end{equation}
Notice that $f$ is continuous and $\alpha$ is linear and continuous. Therefore the problem (\ref{IVP initial}) can be re-written as
\begin{equation}\label{IVP finale}\left\{
\begin{array}{l}
  x'(t) = f(t,x(t)) \; , \quad t\in J \\
  \cr
  x(0) = \langle \alpha ,x|_{[0,t_0]} \rangle
\end{array}
    \right.
\end{equation}
the derivation $'$ being understood component-wise.
Integrating \eqref{IVP finale} we get
\begin{equation*}
x(t)=c+\int\limits_{0}^{t}f(s,x(s))\,ds,
\end{equation*}
with some $c \in \mathcal{S}$. The nonlocal initial condition yields
\begin{equation*}
    c=c \cdot \left \langle \alpha ,\mathbb{I} \right \rangle + \Bigl \langle \alpha , \Bigl ( \int\limits_{0}^{ \cdot }f(s,x(s))\,ds \Bigr ) \Big |_{[0,t_0]} \Bigr \rangle,
\end{equation*}
where $\mathbb{I}=(1,1,...,1,...)$. Thus, assuming that
\begin{equation}\label{Hyp}
\langle \alpha_n,1 \rangle \neq 1 \; \quad \forall \; n\in \mathbb{N},
\end{equation}
we obtain
\begin{equation}\label{sol new}
    x(t)=(\mathbb{I}- \left \langle \alpha ,\mathbb{I} \right  \rangle )^{-1} \cdot  \Bigl \langle \alpha , \Bigl ( \int\limits_{0}^{ \cdot }f(s,x(s))\,ds \Bigr ) \Big |_{[0,t_0]} \Bigr \rangle+\int\limits_{0}^{t}f(s,x(s))\,ds.
 \end{equation}

We consider the space $C(J)$  of all continuous real valued functions defined on $J$ with the topology of the uniform convergence on compacta. Given $(t_k)_{k \in \mathbb{N}}$ a strictly increasing sequence in $(0, +\infty)$ with $t_k \to +\infty$ as $k \to \infty$, this topology is generated by the (countable) family of seminorms
 $$\nu_k(h)= \max \limits_{t\in [0,t_k]}|h(t)|, \qquad h\in C(J), \; k\in \mathbb{N}.$$
Note that $C(J)$ is a Fr\'{e}chet space.
 We denote by $C(J, \mathcal{S})$ the space of all $\mathcal{S}$--valued continuous functions on $J$. Since
 $$C(J, \mathcal{S})=\prod\limits_{n=1}^{\infty}C(J),$$
 the space $C(J, \mathcal{S})$ equipped with the product topology also becomes a Fr\'{e}chet space.

\begin{proposition}\label{fp} Under the assumption \eqref{Hyp} we have that:
\begin{enumerate}
\item[$(i)$] $x\in C(J, \mathcal{S})$ is a solution of problem \eqref{IVP initial} iff it is a fixed point of the operator
 $T:C(J, \mathcal{S})\to C(J, \mathcal{S})$ defined by
 \begin{equation}\label{oper}
    T(v)(t)=(\mathbb{I}-\langle \alpha, \mathbb{I}\rangle)^{-1}\cdot  \Bigl \langle \alpha , \Bigl ( \int\limits_{0}^{ \cdot }f(s,v(s))\,ds \Bigr ) \Big |_{[0,t_0]} \Bigr \rangle+\int\limits_{0}^{t}f(s,v(s))\,ds \; ;
    \end{equation}
\item[$(ii)$]  $T$ is continuous and maps bounded sets into relatively compact sets.
\end{enumerate}
\end{proposition}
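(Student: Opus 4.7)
For part (i), I would use a standard variation-of-constants argument adapted to the nonlocal setting. For the ``only if'' direction, I integrate the system componentwise to get $x(t) = x(0) + \int_0^t f(s, x(s))\,ds$; applying $\alpha$ to the restriction of $x$ to $[0, t_0]$ and using the linearity of $\alpha$ together with the elementary identity $\langle \alpha, c\, \mathbb{I}\rangle = c \cdot \langle \alpha, \mathbb{I}\rangle$ valid for any constant $c \in \mathcal{S}$, the nonlocal condition yields
\[
x(0)\cdot\bigl(\mathbb{I} - \langle \alpha, \mathbb{I}\rangle\bigr) = \Bigl\langle \alpha, \Bigl(\int_0^\cdot f(s, x(s))\,ds\Bigr)\Big|_{[0, t_0]}\Bigr\rangle;
\]
hypothesis \eqref{Hyp} lets us invert componentwise and recover \eqref{oper}. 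Conversely, if $x = T(x)$, componentwise differentiation gives $x'(t) = f(t, x(t))$, and $x(0)$ equals the constant $c$ appearing in the definition of $T$; substituting $x = T(x)$ into $\langle \alpha, x|_{[0, t_0]}\rangle$ and using the same constant-function identity gives $\langle \alpha, x|_{[0, t_0]}\rangle = c\cdot \langle \alpha, \mathbb{I}\rangle + c\cdot (\mathbb{I}-\langle \alpha, \mathbb{I}\rangle) = c = x(0)$, so the nonlocal initial condition holds.

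For part (ii), the product decomposition $C(J, \mathcal{S}) = \prod_n C(J)$ reduces both continuity and relative compactness to statements about each component $T_n$ on each space $C[0, t_k]$; the unifying idea is to enclose the relevant trajectories inside a compact ``box'' in $\mathcal{S}$ via Tychonoff's theorem. For continuity, suppose $v^{(m)} \to v$ in $C(J, \mathcal{S})$; componentwise uniform convergence on $[0, t_k]$ gives constants $M_i$ with $|v_i^{(m)}(s)|, |v_i(s)| \le M_i$, so all trajectories lie in the compact set $K := \prod_i [-M_i, M_i] \subset \mathcal{S}$. Uniform continuity of $f_n$ on $[0, t_k] \times K$, combined with the fact that convergence in $\mathcal{S}$ is exactly coordinatewise convergence, then upgrades the coordinatewise uniform convergence of $v^{(m)}$ to the uniform convergence $f_n(\cdot, v^{(m)}(\cdot)) \to f_n(\cdot, v(\cdot))$ on $[0, t_k]$; standard estimates on the integral together with continuity of $\alpha_n$ on $C[0, t_0]$ then yield $T_n(v^{(m)}) \to T_n(v)$ uniformly on $[0, t_k]$.

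For relative compactness of $T(B)$ with $B \subset C(J, \mathcal{S})$ bounded, the same Tychonoff trick applies: boundedness in the product topology gives constants $M_{i, k}$ with $|v_i(s)| \le M_{i, k}$ for all $v \in B$ and $s \in [0, t_k]$, so the trajectories lie in the compact box $K_k := \prod_i [-M_{i, k}, M_{i, k}]$ and $|f_n|$ is bounded on $[0, t_k] \times K_k$ by some $N_{n, k}$. Since the $\alpha$-term in \eqref{oper} is constant in $t$, this gives, uniformly for $v \in B$, both a bound on $|T_n(v)(t)|$ for $t \in [0, t_k]$ and the equi-Lipschitz estimate $|T_n(v)(t_1) - T_n(v)(t_2)| \le N_{n, k}|t_1 - t_2|$. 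Arzel\`a--Ascoli then gives relative compactness of $\{T_n(v)|_{[0, t_k]} : v \in B\}$ in each $C[0, t_k]$, which by the characterization of compactness in the countable product $\prod_n C(J)$ yields relative compactness of $T(B)$ in $C(J, \mathcal{S})$.

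The main technical obstacle is that $\mathcal{S}$ is infinite-dimensional and $f_n$ depends on infinitely many coordinates, so compactness is not automatic. Tychonoff's theorem is precisely what rescues the argument by making the boxes $\prod_i[-M_i, M_i]$ compact in $\mathcal{S}$, and this compactness then propagates to uniform bounds and uniform continuity for $f_n$ and hence for $T_n$; everything else is bookkeeping with the linearity and continuity of $\alpha$.
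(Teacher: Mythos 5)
Your proposal is correct and follows essentially the same route as the paper: reduce everything to the integral operator, establish continuity and (via Arzel\`a--Ascoli componentwise on each $[0,t_k]$ together with Tychonoff's theorem for the product) relative compactness, and handle the nonlocal term through the continuity of $\alpha$. The only difference is that you spell out the details the paper dismisses as ``standard'' or ``routine'' --- in particular the compact-box argument in $\mathcal{S}$ that furnishes the uniform bounds and uniform continuity of the $f_n$ --- which is a welcome elaboration rather than a departure.
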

\begin{proof} $(i)$ This fact follows by virtue of \eqref{sol new} and \eqref{oper}.\newline
$(ii)$ It is a standard matter to check that, for all $k\in \mathbb{N}$, the operator $R_k: C([0,t_k],\mathcal{S}) \to C([0,t_k],\mathcal{S})$ defined by
$$R_k(v)(t)=\int_0^tf(s,v(s))ds \; , \; v\in C([0,t_k],\mathcal{S}), \; t\in [0,t_k]$$
is continuous. We now show that $R:C(J,\mathcal{S}) \to C(J,\mathcal{S})$ given by
$$R(v)(t)=\int_0^tf(s,v(s))ds \; , \; v\in C(J,\mathcal{S}), \; t\in J$$
 inherits the same property. Indeed let $v \in  C(J,\mathcal{S})$ and $(v^m)_{m\in \mathbb{N}} \subset  C(J,\mathcal{S})$ be with $v^m \to v$, as $m\to \infty$. Then, one has that $v^m|_{[0,t_k]} \to v|_{[0,t_k]}$ in $C([0,t_k],\mathcal{S})$, for all $k\in \mathbb{N}$. Using the continuity of $R_k$, it follows
 $$R(v^m)|_{[0,t_k]} =R_k(v^m|_{[0,t_k]})\to R_k(v|_{[0,t_k]})=R(v)|_{[0,t_k]} \; ,\ \text{for all}\ k\in \mathbb{N},  $$
which means that $R(v^m) \to R(v)$ in $C(J,\mathcal{S})$.

We also have that $R$ maps bounded sets into relatively compact sets. To see this, let $M\subset C(J,\mathcal{S})$ be bounded. Using the Arzel\`a--Ascoli theorem, it is routine to show that for any fixed $n\in \mathbb{N}$, the set
$$\{R(v)_{n}|_{[0,t_k]}\,:\, v\in M \}=\{R_k(v|_{[0,t_k]})_{n}\,:\, v\in M \}$$  is relatively compact in $C[0,t_k]$ for each $k\in \mathbb{N.}$ Hence, the set $\{R(v)_{n}\,:\,v\in M \}$
is relatively compact in $C(J)$. By virtue of the Tychonoff theorem, we get that  $R(M)$ is relatively compact in the product space $C(J,\mathcal{S}).$

Now, the conclusion follows from the equality
$$T(v)= (\mathbb{I}-\langle \alpha,\mathbb{I}\rangle)^{-1}\cdot \left \langle \alpha , R(v)|_{[0,t_0]} \right \rangle+R(v)\; , \; \forall \; v\in C(J,\mathcal{S})$$
and the above properties of the operator $R$.
\end{proof}
We shall make use of the following consequence of the classical Schauder-Tychonoff fixed point theorem, see for example Corollary 2, page 107 of \cite{[D]}.
\begin{theorem}\label{ScTh}
Let $K$ be a closed convex set in a Hausdorff, complete  locally convex vector space and $F:K \mapsto K$ be continuous. If $F(K)$ is relatively compact then $F$ has a fixed point in $K$.
\end{theorem}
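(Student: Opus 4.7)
The plan is to reduce this statement to the classical Schauder-Tychonoff fixed point theorem in its more familiar form, which asserts that every continuous self-map of a nonempty compact convex subset of a Hausdorff locally convex topological vector space admits a fixed point. The hypothesis here only guarantees that $K$ is closed and convex and that the image $F(K)$ is relatively compact, so the domain $K$ itself need not be compact; the task is to cut $K$ down to a compact convex invariant subset on which the classical version can be invoked.

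To this end, I would set $K_0 := \overline{\mathrm{co}}(F(K))$, the closed convex hull in the ambient space. Since $K$ is convex, closed, and contains $F(K)$, we have $\mathrm{co}(F(K)) \subset K$ and hence $K_0 \subset K$. Consequently $F(K_0) \subset F(K) \subset K_0$, so $K_0$ is $F$-invariant, and the restriction $F|_{K_0}: K_0 \to K_0$ is continuous. It remains to check that $K_0$ is nonempty (immediate, since $K\neq\emptyset$ implies $F(K)\neq\emptyset$) and compact; given this, the classical Schauder-Tychonoff theorem applied to $F|_{K_0}$ yields a fixed point $x\in K_0 \subset K$.

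The decisive step is the compactness of $K_0$. This is the classical fact that in a \emph{complete} Hausdorff locally convex space, the closed convex hull of a relatively compact set is compact; it is precisely the completeness assumption in the statement that makes this available (the conclusion fails in incomplete locally convex spaces, which is why this hypothesis appears). The argument proceeds by showing that the convex hull of a totally bounded set is totally bounded, and then invoking completeness to pass to the closure.

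The main obstacle is therefore not the fixed point argument itself but the compactness of $\overline{\mathrm{co}}(F(K))$, which is a nontrivial but standard result in the theory of locally convex spaces. In the applications of interest the ambient space is a Fr\'{e}chet space, so completeness is automatic and the reduction to the classical Schauder-Tychonoff theorem is clean.
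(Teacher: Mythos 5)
Your argument is correct: passing to $K_0=\overline{\mathrm{co}}(F(K))$, which is contained in $K$ because $K$ is closed and convex, is $F$-invariant since $F(K_0)\subset F(K)\subset K_0$, and is compact by completeness of the ambient space, reduces the statement to Tychonoff's fixed point theorem for nonempty compact convex sets. The paper offers no proof of this result --- it simply cites Corollary 2, p.~107 of Day's book --- and your derivation is exactly the standard argument behind that reference, with the role of the completeness hypothesis (compactness of the closed convex hull of a relatively compact set) correctly identified as the one nontrivial step.
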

\section{Main result}
We present now the main result of the paper. In order to do this, we fix a strictly increasing sequence $(n_p)_{p\in \mathbb{N}} \subset \mathbb{N}$ and $(t_p) _{p\in \mathbb{N}}$ a sequence of real numbers, such that
\begin{equation}\label{t}
t_0<t_1<...<t_p \to +\infty \; , \; \mbox{ as }p\to \infty,
\end{equation}
and, for $\alpha$ as in \eqref{alpha}, we denote
\begin{equation}
\| \alpha\|_*= ( \| \alpha_1\|, \| \alpha_2\|,...,\| \alpha_n\|,...),
\end{equation}
where $\| \alpha_n\|$ stands for the norm of the functional $\alpha_n$ in the dual space of $C[0,t_0]$ ($n\in \mathbb{N}$).
\begin{theorem}\label{MR}
Assume that \eqref{Hyp} holds and that $f$ satisfies the condition
\begin{equation}\label{growth}
    [f(t,x)]_{n_{p}}\leq\left\{
\begin{array}{l}
  A_{p}(t)[x]_{n_{p}}+B_{p}\;,\ t\in [0,t_0], \\\\
  C_{p}([x]_{n_{p}}+1)\;,\ t\in [t_0,t_{p}],
\end{array}
\right. \qquad(x\in \mathcal{S} \; , \; p\in \mathbb{N}),
\end{equation}
with $A_{p}\in L^{1}_+(0,t_0)$  and $B_{p},\;C_{p}\in \mathbb{R}^{+}.$  If the inequality
\begin{equation}\label{less than1}
    \{[(\mathbb{I}- \langle \alpha , \mathbb{I} \rangle)^{-1}]_{n_{p}}[ \; \| \alpha \|_* ]_{n_{p}}+1 \} \|A_{p}\|_{L^{1}} < 1
\end{equation}
holds for every $p\in \mathbb{N}$, then the problem \eqref{IVP finale} (hence \eqref{IVP initial}) has at least one solution $x\in C(J, \mathcal{S}).$
\end{theorem}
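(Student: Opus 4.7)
The plan is to apply the Schauder--Tychonoff theorem (Theorem~\ref{ScTh}) to the operator $T$ from Proposition~\ref{fp}. Continuity of $T$ and relative compactness of images of bounded sets are already in hand by Proposition~\ref{fp}$(ii)$, so the real task is to construct a nonempty, closed, convex, bounded invariant set $K \subset C(J,\mathcal{S})$; then $T(K)$ is automatically relatively compact and Theorem~\ref{ScTh} supplies a fixed point, which by Proposition~\ref{fp}$(i)$ solves \eqref{IVP finale}.

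I would define $K$ by imposing, for each $p \in \mathbb{N}$, a pointwise bound of the form $[v(t)]_{n_p} \le \rho_p(t)$ for $t \in [0, t_p]$, where $\rho_p$ is constructed in two stages matching the two cases of \eqref{growth}. Using the submultiplicativity $[x\cdot y]_{n_p} \le [x]_{n_p}[y]_{n_p}$ and the estimate $[\langle \alpha, w|_{[0,t_0]}\rangle]_{n_p} \le [\|\alpha\|_*]_{n_p}\,\max_{[0,t_0]}[w(\cdot)]_{n_p}$, a direct computation from \eqref{oper} shows that if $[v(s)]_{n_p} \le \rho$ on $[0,t_0]$, then for $t \in [0,t_0]$
\begin{equation*}
[T(v)(t)]_{n_p} \le (\beta_p\gamma_p+1)\bigl(\|A_p\|_{L^1}\rho + B_p t_0\bigr),
\end{equation*}
with $\beta_p := [(\mathbb{I}-\langle\alpha,\mathbb{I}\rangle)^{-1}]_{n_p}$ and $\gamma_p := [\|\alpha\|_*]_{n_p}$. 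Since hypothesis \eqref{less than1} reads $(\beta_p\gamma_p+1)\|A_p\|_{L^1} < 1$, the choice
\begin{equation*}
\rho_p^{(0)} := \frac{(\beta_p\gamma_p+1)B_p t_0}{1 - (\beta_p\gamma_p+1)\|A_p\|_{L^1}}
\end{equation*}
makes the above inequality self-improving on $[0,t_0]$, so I set $\rho_p(t) := \rho_p^{(0)}$ on $[0,t_0]$.

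On $[t_0, t_p]$ the growth condition is only sub-linear (no smallness hypothesis), so I handle it with a Gronwall-type construction. The same seminorm estimate, combined with the identity $(\beta_p\gamma_p+1)(\|A_p\|_{L^1}\rho_p^{(0)} + B_p t_0) = \rho_p^{(0)}$, yields
\begin{equation*}
[T(v)(t)]_{n_p} \le \rho_p^{(0)} + \int_{t_0}^{t} C_p\bigl([v(s)]_{n_p}+1\bigr)\,ds, \qquad t \in [t_0,t_p].
\end{equation*}
Extending $\rho_p$ to $[t_0, t_p]$ as the solution of $\rho_p'(t) = C_p(\rho_p(t)+1)$ with $\rho_p(t_0) = \rho_p^{(0)}$, i.e.\ $\rho_p(t) = (\rho_p^{(0)}+1)e^{C_p(t-t_0)} - 1$, I define
\begin{equation*}
K := \bigl\{v \in C(J,\mathcal{S}) : [v(t)]_{n_p} \le \rho_p(t)\ \text{for all}\ t\in[0,t_p]\ \text{and all}\ p \in \mathbb{N}\bigr\}.
\end{equation*}
The two estimates above give $T(K) \subseteq K$. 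The set $K$ is nonempty (it contains the zero function), convex, closed as an intersection of sublevel sets of continuous seminorms, and bounded in the product topology, since for any basic seminorm $v \mapsto \max_{[0,t_k]}|v_n(\cdot)|$ one picks $p$ with $n_p \ge n$ and $t_p \ge t_k$ to obtain the finite bound $\max_{[0,t_k]}\rho_p$.

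The main obstacle is precisely the two-phase nature of $\rho_p$: condition \eqref{less than1} is a contraction-type smallness assumption that only controls the interval $[0,t_0]$ where nonlocality enters, whereas on $[t_0,t_p]$ one has no such smallness and must rely on the linearity of the bound to invoke Gronwall. Once the correct $\rho_p$ is isolated, the remaining verifications — $T$-invariance, closedness, convexity, boundedness — are routine, and Theorem~\ref{ScTh} finishes the proof.
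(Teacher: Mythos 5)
Your proposal is correct and follows the same global strategy as the paper: reduce to the operator $T$ of Proposition~\ref{fp}, build a nonempty closed convex bounded set $K$ with $T(K)\subseteq K$, and invoke Theorem~\ref{ScTh}. The difference lies in how the invariant set is engineered. On $[t_0,t_p]$ the paper does not use a time-dependent radius: it introduces the Bielecki-type weighted seminorm $\mathcal{Q}_p(x)=\max_{t_0\le t\le t_p}e^{-\theta_p(t-t_0)}[x(t)]_{n_p}$, chooses $\theta_p$ large enough that $M_p:=G_p\|A_p\|_{L^1}+C_p/\theta_p<1$, and then works with constant-radius balls $\mathcal{R}_p(x)\le\rho_p$ where $\rho_p\ge(1-M_p)^{-1}K_p$; your exponential tube $[v(t)]_{n_p}\le(\rho_p^{(0)}+1)e^{C_p(t-t_0)}-1$ is essentially the dual picture (a constant-radius ball in the weighted norm is an exponentially growing pointwise tube), obtained by integrating the comparison ODE rather than by absorbing the Gronwall growth into the weight. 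On $[0,t_0]$ the paper also does not isolate the sharp fixed point $\rho_p^{(0)}$ of the affine recursion as you do; it folds the estimate into the single inequality $\mathcal{R}_p(T(x))\le M_p\mathcal{R}_p(x)+K_p$ and takes any $\rho_p\ge(1-M_p)^{-1}K_p$. Your version buys slightly more explicit constants and avoids introducing the auxiliary parameters $\theta_p$; the paper's version keeps all the bookkeeping inside one family of seminorms $\mathcal{R}_p$, which makes the invariance computation a one-line affine contraction estimate. Both arguments verify the same hypotheses of Theorem~\ref{ScTh}, and your checks of nonemptiness, convexity, closedness and boundedness of $K$ are sound.
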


\begin{proof} Taking into account Proposition~\ref{fp}, it suffices to show that $T$ has a fixed point in $C(J, \mathcal{S}).$  In order to do this, we shall apply Theorem~\ref{ScTh}.
Firstly, note that
\begin{equation}\label{sem}
\begin{array}{lll}
\displaystyle
 \Bigl [ \Bigl \langle \alpha , \Bigl ( \int\limits_{0}^{ \cdot }u(s)\,ds \Bigr ) \Big |_{[0,t_0]} \Bigr \rangle  \Bigr ]_n  & = & \displaystyle \max \limits_{1\leq i \leq n} \Bigl | \Bigl \langle \alpha_i , \Bigl ( \int\limits_{0}^{ \cdot }u_i(s)\,ds \Bigr ) \Big |_{[0,t_0]} \Bigr \rangle \Bigr |\\
& \leq &  \displaystyle\left [ \; \|\alpha \|_* \right ]_{n} \int \limits_0^{t_0} \left [ u(s) \right ]_nds \; ,
\end{array}
\end{equation}
for all $u\in C(J, \mathcal{S})$ and $n\in \mathbb{N}$.

For each  $p\in \mathbb{N},$  let
$$G_{p}:=[(\mathbb{I}- \langle \alpha , \mathbb{I} \rangle)^{-1}]_{n_{p}}[ \; \| \alpha \|_* ]_{n_{p}}+1$$
and
$\theta _p >0$ be such that
$$M_{{p}} := G_{p}\|A_{p}\|_{L^1}+\frac{C_{p}}{\theta_{p}}<1;$$
note that such a $\theta _p $ exists from \eqref{less than1}.

We introduce the (continuous) semi-norms
$$\mathcal{P}_p(x)=\max \limits_{0 \leq t \leq t_0}[x(t)]_{n_{p}} \; ,\quad \mathcal{Q}_p(x)=\max \limits_{t_0 \leq t \leq t_{{p}}}e^{-\theta_{{p}}(t-t_{0})} [x(t)]_{n_{p}},$$
$$\mathcal{R}_{p}(x)=\max \{ \mathcal{P}_p(x) , \; \mathcal{Q}_p(x) \}, \quad x\in C(J, \mathcal{S}) \; , \; p\in \mathbb{N}.$$
We denote by
$$K_{p}: = G_{p}t_0B_{p}+C_p(t_{{p}}-t_0),$$
we take $\rho_p$ such that
\begin{equation}\label{Rmn}
    \rho_{p}\geq (1-M_{{p}})^{-1}K_{p},
\end{equation}
and define
$$\Omega_{p}:=\{x\in C(J, \mathcal{S}) \,:\, \mathcal{R}_{p}(x)\leq \rho_{p}\} \; , \quad p\in \mathbb{N}.$$
Since each $\Omega_{p}$ is closed and convex,  the (nonempty) set
$$\Omega:=\bigcap\limits_{p=1}^{\infty}\Omega_{p}$$
has the same properties. Next, we prove that $T(\Omega )\subset \Omega$.

Let $x \in \Omega $ and $p\in \mathbb{N}$. If $t\in [0,t_0]$, using \eqref{sem}, we have
$$
\begin{array}{lll}
 \displaystyle  [T(x)(t)]_{n_{p}} & \leq &  \displaystyle \Bigl [(\mathbb{I}-\langle \alpha, \mathbb{I}\rangle)^{-1}\cdot  \Bigl \langle \alpha , \Bigl ( \int\limits_{0}^{ \cdot }f(s,x(s))\,ds \Bigr ) \Big |_{[0,t_0]} \Bigr \rangle \Bigr ]_{n_{p}}  +  \displaystyle \Bigl [\int\limits_{0}^{t}f(s,x(s))\,ds \Bigr ]_{n_{p}}\\
   & \leq & \displaystyle [(\mathbb{I}-\langle \alpha, \mathbb{I} \rangle)^{-1}]_{n_{p}}[ \; \|\alpha\|_* ]_{n_{p}}\int\limits_{0}^{t_0}[f(s,x(s))]_{n_{p}}\,ds+ \int\limits_{0}^{t_0} \left [f(s,x(s)) \right ]_{n_{p}}\,ds \\
   & = & \displaystyle G_p\int\limits_{0}^{t_0}[f(s,x(s))]_{n_{p}}\,ds \; .
\end{array}
$$
The growth condition (\ref{growth}) gives
$$
   [T(x)(t)]_{n_{p}} \leq  G_p \{ \|A_{p}\|_{L^{1}}\mathcal{P}_p(x)+t_0B_{p}\}
$$
which, taking the maximum  over $[0,t_0],$ yields
\begin{equation}\label{ct0}
    \mathcal{P}_p(T(x))\leq G_p \{ \|A_{p}\|_{L^{1}}\mathcal{P}_p(x)+t_0B_{p}\}.
\end{equation}

If $t\in [t_0,t_p]$, then arguing as above we get
\begin{equation}\label{inq}
  \begin{array}{lll}
 \displaystyle  [T(x)(t)]_{n_{p}} & \leq & \displaystyle G_p \{ \|A_{p}\|_{L^{1}}\mathcal{P}_p(x)+t_0B_{p}\}+\int\limits_{t_0}^{t} \left [f(s,x(s)) \right ]_{n_{p}}\,ds\\
 & \leq & \displaystyle    G_p \{ \|A_{p}\|_{L^{1}}\mathcal{P}_p(x)+t_0B_{p}\}+C_p\int\limits_{t_0}^{t}\bigl ( \left [x(s) \right ]_{n_{p}}+1 \bigr )\,ds \\
 & \leq & \displaystyle    G_p \{ \|A_{p}\|_{L^{1}}\mathcal{P}_p(x)+t_0B_{p}\} \\
 &  & + \displaystyle C_p \int\limits_{t_0}^{t} e^{\theta_{{p}}(s-t_{0})}e^{-\theta_{{p}}(s-t_{0})} \left [x(s) \right ]_{n_{p}}\, ds +C_p(t_{p}-t_0) \\
  & \leq & \displaystyle    G_p \{ \|A_{p}\|_{L^{1}}\mathcal{P}_p(x)+t_0B_{p}\} +C_p\Bigl \{ \mathcal{Q}_p(x)\frac{e^{\theta_{{p}}(t-t_{0})}}{\theta _p}+t_{p}-t_0\Bigr \}.
 \end{array}
\end{equation}
Multiplying the inequality~\eqref{inq} by $\displaystyle e^{-\theta_{{p}}(t-t_{0})} \; (\leq 1)$, we obtain
$$[T(x)(t)]_{n_{p}}e^{-\theta_{{p}}(t-t_{0})} \leq G_p \{ \|A_{p}\|_{L^{1}}\mathcal{P}_p(x)+t_0B_{p}\} +C_p\Bigl \{ \frac{\mathcal{Q}_p(x)}{\theta _p}+t_{p}-t_0 \Bigr \},$$ and, taking the maximum over $[t_0,t_p]$, we obtain
\begin{equation}\label{ct1}
\mathcal{Q}_p(T(x)) \leq G_p \{ \|A_{p}\|_{L^{1}}\mathcal{P}_p(x)+t_0B_{p}\} +C_p\Bigl \{ \frac{\mathcal{Q}_p(x)}{\theta _p}+t_{p}-t_0\Bigr \}.
\end{equation}
From \eqref{ct0}, \eqref{ct1} and \eqref{Rmn}, we obtain
$$ \begin{array}{lll} \mathcal{R}_p(T(x)) & \leq & \displaystyle G_p \{ \|A_{p}\|_{L^{1}}\mathcal{R}_p(x)+t_0B_{p}\} +C_p\Bigl \{ \frac{\mathcal{R}_p(x)}{\theta _p}+t_{p}-t_0\Bigr \} \\
& = & M_p \mathcal{R}_p(x) + K_p \\
& \leq & M_p \rho_p + \rho_p(1-M_p) = \rho _p,
\end{array}
$$
showing that $x\in \Omega _p$, for all $p\in \mathbb{N}$; hence, $x \in \Omega$. Finally, note that $\Omega$ is a bounded subset of $C(J, \mathcal{S})$.  Then, from Proposition~\ref{fp}, the continuous operator  $T$ maps $\Omega$ into a relatively compact set. Therefore, by virtue of Theorem~\ref{ScTh}, $T$ has a fixed point in $\Omega$ and the proof is complete.
\end{proof}

\begin{remark} Note that the result above is also valid in the case of a non-compact interval $J_b=[0,b)$ with $t_0\in (0,b)$, $b\in (0,+\infty)$, instead of $J$. The only modification is to consider $t_p \to b$ in \eqref{t}, instead of $t_p \to +\infty$, as $p\to \infty$.
\end{remark}

% \noindent {\bf Finite dimensional systems on the half line.}
We  now illustrate how this methodology can be applied in the case of a finite system of differential equations. The assumptions are relatively easy to be checked and we include this result for completeness.

We fix $N\in \N$ and discuss the solvability of the nonlocal initial value problem
\begin{equation}\label{IVP initialf}\left\{
\begin{array}{l}
  x'_n(t) = g_n(t,x_1(t), x_2(t), ..., x_N(t)) \; , \quad t\in J ; \\
  \cr
  x_n(0) = \langle \eta_n,x_n|_{[0,t_0]} \rangle,
\end{array}
\right. \qquad(n = 1\ldots N)
\end{equation}
where  $g_n:J\times \R^N \to \mathbb{R}$ are continuous functions and  $\eta_n:C[0,t_0]\to \mathbb{R}$ are continuous linear functionals, satisfying
\begin{equation}\label{eta}
\langle \eta_n,1 \rangle \neq 1\; , \quad n = 1\ldots N.
\end{equation}

We denote by
$$\| y \|_{\infty }:=\max\limits_{1\leq n\leq N}|y_n|\; ,\quad \forall \; y=(y_1,...,y_N)\in \mathbb{R}^N$$
and by
$$\eta_*:=( \| \eta_1\|, ...,\| \eta_N\|) \; ,\quad \underline{\eta}:=\Bigl ( \frac{1}{1- \langle \eta _1,1 \rangle} , ..., \frac{1}{1- \langle \eta _N,1 \rangle}\Bigr ).$$
With the notation above we can state the following existence result.

\begin{theorem}\label{MR1}
Let  $(t_p) _{p\in \mathbb{N}}$ be a sequence satisfying \eqref{t}. Assume the condition~\eqref{eta} holds and that  $g:=(g_1,...,g_N)$ satisfies
\begin{equation}\label{growthf}
    \|g(t,x)\|_{\infty}\leq\left\{
\begin{array}{lll}
  A(t)\|x\|_{\infty}+B\;,\ t\in [0,t_0], \\\\
  C_{p}(\|x\|_{\infty}+1)\;,\ t\in [t_0,t_{p}],
\end{array}
\right. \qquad( x\in \R^N \; ,\; p\in \mathbb{N}),
\end{equation}
with $A\in L^{1}_+(0,t_0)$  and $B,\;C_{p}\in \mathbb{R}^{+}.$  If the inequality
\begin{equation}\label{less than1f}
    \{\| \underline{\eta} \|_{\infty}\| \eta_* \|_{\infty}+1 \} \|A\|_{L^{1}} < 1
\end{equation}
holds, then the nonlocal initial value problem~\eqref{IVP initialf} has at least one solution.
\end{theorem}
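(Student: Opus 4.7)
The plan is to reduce Theorem~\ref{MR1} to Theorem~\ref{MR} by embedding the finite $N$-dimensional system into an infinite one in such a way that the hypotheses of the latter collapse exactly to the hypotheses of the former. Concretely, I would set $f_n := g_n$ for $n \leq N$ and $f_n \equiv 0$ for $n > N$, and $\alpha_n := \eta_n$ for $n \leq N$ and $\alpha_n := \eta_1$ for $n > N$. Padding with $\eta_1$ rather than the zero functional is the one small trick: it keeps the maxima appearing in \eqref{less than1} from acquiring the spurious value $1$ and thus possibly exceeding $\|\underline{\eta}\|_\infty$.

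I would then pick the strictly increasing sequence $n_p := N + p - 1$, so that $n_p \geq N$ for every $p \in \mathbb{N}$. With this choice, one obtains for every $p$ the identifications
\[
[(\mathbb{I} - \langle \alpha, \mathbb{I}\rangle)^{-1}]_{n_p} = \|\underline{\eta}\|_\infty, \qquad [\,\|\alpha\|_*\,]_{n_p} = \|\eta_*\|_\infty,
\]
and, for every $x \in \mathcal{S}$ and $t \in J$,
\[
[f(t,x)]_{n_p} = \|g(t, x_1, \ldots, x_N)\|_\infty.
\]
Since $\|(x_1, \ldots, x_N)\|_\infty = [x]_N \leq [x]_{n_p}$, the growth hypothesis \eqref{growthf} implies \eqref{growth} with $A_p := A$, $B_p := B$, and the same $C_p$'s. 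Hypothesis \eqref{less than1} then reduces exactly to \eqref{less than1f}, while \eqref{Hyp} is immediate from \eqref{eta} together with the padding choice $\alpha_n = \eta_1$ for $n > N$.

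Applying Theorem~\ref{MR} to this extended infinite system yields a solution $x = (x_1, x_2, \ldots) \in C(J, \mathcal{S})$ of \eqref{IVP finale}. To conclude, I would simply observe that the first $N$ components $(x_1, \ldots, x_N)$ solve \eqref{IVP initialf}: by construction $x_n'(t) = g_n(t, x_1(t), \ldots, x_N(t))$ and $x_n(0) = \langle \eta_n, x_n|_{[0,t_0]} \rangle$ for $n = 1, \ldots, N$, and the tail coordinates $x_n$, $n > N$, are inert because $f_n$ for $n \leq N$ does not depend on them.

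I do not expect any serious obstacle; the only point requiring care is the padding of the $\alpha_n$ beyond index $N$, so that the constant $G_p$ appearing in \eqref{less than1} matches the constant $\|\underline{\eta}\|_\infty \|\eta_*\|_\infty + 1$ from \eqref{less than1f} rather than an inflated version of it.
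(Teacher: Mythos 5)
Your proposal is correct and follows essentially the same route as the paper: embed the finite system into the infinite one, take $n_p = N+p-1$, and check that \eqref{growth} and \eqref{less than1} reduce to \eqref{growthf} and \eqref{less than1f}. The only (immaterial) difference is the padding — the paper repeats $g_N$ and $\eta_N$ for indices beyond $N$, whereas you pad with $0$ and $\eta_1$; both choices avoid the inflation of the constants that you rightly flag.
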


\begin{proof} Consider the problem~\eqref{IVP finale} with
$$f=(g_1,g_2,...,g_N,g_{N}, ...)\; , \quad \alpha= (\eta_1, \eta_2,...,\eta_N,\eta_N,...)$$
and note that if $x=(x_1,x_2,...,x_N,...)\in C(J, \mathcal{S})$ is a solution, then $(x_1,x_2,...,x_N)$ solves~\eqref{IVP initialf}. To prove the solvability of \eqref{IVP finale} with the above choices of $f$ and $\alpha$, we apply Theorem~\ref{t} with $n_p=N+p-1$, $\forall p\in \N$. In this view, it follows from \eqref{growthf} that the growth condition~\eqref{growth} is fulfilled with $A_p=A$ and $B_p=B$. Also, we have
$$\{[(\mathbb{I}- \langle \alpha , \mathbb{I} \rangle)^{-1}]_{n_{p}}[ \; \| \alpha \|_* ]_{n_{p}}+1 \} \|A_{p}\|_{L^{1}}=\{\| \underline{\eta} \|_{\infty}\| \eta_* \|_{\infty}+1 \} \|A\|_{L^{1}},$$
for all $p\in \mathbb{N}$. Hence, the condition~\eqref{less than1} is satisfied from~\eqref{less than1f}  and the proof is complete.
\end{proof}
We conclude the paper with an example, where notice that the constants that occur in Theorem~\ref{t} can be effectively computed.
\begin{example}\label{ex1}
Consider the  nonlocal initial value problem:
\begin{equation}\label{ex}\left\{
\begin{array}{l}
  x_n'(t)  =  \displaystyle \frac{k_{n}}{1+t^{2}}\,x_n+ t\cos x_{n+1}, \; \quad t\in J ; \\
  \cr
  x_n(0)  = \displaystyle  \frac{1}{n+t_0}\,\int\limits_{0}^{t_0}x_n(s)\,ds.
\end{array}
\right. \qquad(n\in \mathbb{N}),
\end{equation}
The system \eqref{ex}
has at least one solution, provided that $k =(k_1,k_2,...,k_n,...)\in \mathcal{S}$ satisfies
\begin{equation}\label{arc}
    |k_{n}| < \frac{1}{(1+t_{0})\arctan t_{0}} \, , \quad \forall n\in \N.
\end{equation}
To see this, we apply Theorem \ref{MR} with
  $$
f_n(t,x_1,x_2,...,x_n,...)=\displaystyle \frac{k_{n}}{1+t^{2}}\,x_n+t\cos x_{n+1},
$$
$$
\langle \alpha_n, v \rangle= \displaystyle \frac{1}{n+t_0}\,\int\limits_{0}^{t_0}v(s)\,ds \; , \quad \forall v\in C[0,t_0] ,
$$
$n_p=p$ and  $(t_p) _{p\in \mathbb{N}}$ an arbitrary sequence satisfying \eqref{t}.
We have
\begin{equation}\label{e1}
\| \alpha _n \|=\langle \alpha_n, 1 \rangle = \displaystyle \frac{t_0}{n+t_0}<1,\quad \forall\,\,n\in \mathbb{N},
\end{equation}
and therefore \eqref{Hyp} is satisfied. Also, the growth condition \eqref{growth} is fulfilled with
\begin{equation}\label{e2}
\displaystyle A_p(t)=\frac{[k]_p}{1+t^2}, \quad B_p=t_0 , \quad C_p=[k]_p+t_p , \quad \forall p\in \N.
\end{equation}
Using \eqref{e1} and \eqref{e2} we can compute
\begin{equation*}
    \{[(\mathbb{I}- \langle \alpha , \mathbb{I} \rangle)^{-1}]_{{p}}[ \; \| \alpha \|_* ]_{{p}}+1 \} \|A_{p}\|_{L^{1}}=[k]_p(1+t_0)\arctan t_{0}
\end{equation*}
and \eqref{less than1} is accomplished from \eqref{arc}.
\end{example}

\section*{Acknowledgments}
G. Infante was partially supported by G.N.A.M.P.A. - INdAM (Italy).

\end{document}